\def\marker{\>\hbox{${\vcenter{\vbox{
    \hrule height 0.4pt\hbox{\vrule width 0.4pt height 6pt
    \kern6pt\vrule width 0.4pt}\hrule height 0.4pt}}}$}\>}
\newtheorem{theorem}{Theorem} 
\newtheorem{theorem*}{Theorem} 
\newtheorem{lemma}[theorem]{Lemma}
\theoremstyle{definition}
\newtheorem{question}{Question}
\newtheorem{construction}{Construction}
\theoremstyle{remark}
\newtheorem{claim}{Claim}
\newcommand{\ex}{\text{ex}}
\DeclareMathOperator{\sat}{sat}
\title{Graph Saturation in Multipartite Graphs}
\author{Michael Ferrara\footnotemark[1] \footnotemark[3], Michael S.\ Jacobson\footnotemark[1],\\ Florian Pfender\footnotemark[1], and Paul S.\ Wenger\footnotemark[2]}
\begin{document}

\maketitle

\begin{abstract}

Let $G$ be a fixed graph and let ${\mathcal F}$ be a family of graphs.  A subgraph $J$ of $G$ is \textit{${\mathcal F}$-saturated} if no member of ${\mathcal F}$ is a subgraph of $J$, but for any edge $e$ in $E(G)-E(J)$, some element of ${\mathcal F}$ is a subgraph of $J+e$.  We let $\ex({\mathcal F},G)$ and $\sat({\mathcal F},G)$ denote the maximum and minimum size of an ${\mathcal F}$-saturated subgraph of $G$, respectively.  If no element of ${\mathcal F}$ is a subgraph of $G$, then $\sat({\mathcal F},G) = \ex({\mathcal F}, G) = |E(G)|$.

In this paper, for $k\ge 3$ and $n\ge 100$ we determine $\sat(K_3,K_k^n)$, where $K_k^n$ is the complete balanced $k$-partite graph with partite sets of size $n$.  We also give several families of constructions of $K_t$-saturated subgraphs of $K_k^n$ for $t\ge 4$.  Our results and constructions provide an informative contrast to recent results on the edge-density version of $\ex(K_t,K_k^n)$ from [A. Bondy, J. Shen, S. Thomass\'e, and C. Thomassen, Density conditions for triangles in multipartite graphs, {\it Combinatorica} \textbf{26} (2006), 121--131] and [F. Pfender, Complete subgraphs in multipartite graphs, {\it Combinatorica} \textbf{32} (2012), no. 4,  483--495].     

{\bf Keywords:} Saturated graph, saturation number.   
\end{abstract}

\renewcommand{\thefootnote}{\fnsymbol{footnote}}
\footnotetext[1]{Dept.\ of Mathematical and Statistical Sciences, Univ.\ of Colorado Denver, Denver, CO; email addresses:
{\tt michael.ferrara@ucdenver.edu}, {\tt michael.jacobson@ucdenver.edu},
{\tt florian.pfender@ucdenver.edu}.}
\footnotetext[2]{
School of Mathematical Sciences, Rochester Inst.\ of Technology, Rochester, NY;
{\tt pswsma@rit.edu}.}
\footnotetext[3]{Research supported in part by Simons Foundation Collaboration Grant \# 206692.}
\renewcommand{\thefootnote}{\arabic{footnote}}

\baselineskip18pt

\section{Introduction}

All graphs in this paper are simple.  Let $N(v)$ and $N[v]$ denote the open and closed neighborhoods of a vertex $v$, respectively, and for a set of vertices $S$, let $N(S) = \bigcup_{x\in S} N(S)$.  The set $N[S]$ is defined similarly.  Further, $d(v)$ denotes the degree of a vertex $v$, and $\delta(G)$ denotes the minimum degree of a graph $G$. Given two sets of vertices $X$, and $Y$, we let $E(X,Y)$ denote the set of edges joining $X$ and $Y$.  Central to this paper is $K_k^n$, the complete balanced $k$-partite graph with partite sets of size $n$.  Throughout, $V_1, V_2, \ldots,V_k$ will be the partite sets of $K_k^n$ such that $V_i=\{v_i^1,v_i^2,\ldots ,v_i^n\}$ for each $i\in\{1,\ldots,k\}$.  Furthermore, to avoid certain degeneracies, we assume that $k\ge 3$ and that $n\ge 2$.

Given a family of graphs ${\mathcal F}$, a graph $G$ is {\it ${\mathcal F}$-saturated} if no element of ${\mathcal F}$ is a subgraph of $G$, but for any edge $e$ in the complement of $G$, some element of ${\mathcal F}$ is a subgraph of $G+e$.  If ${\mathcal F}=\{H\}$, then we say that $G$ is {\it $H$-saturated}.  The classical extremal function $\ex(H,n)$ is the maximum number of edges in an $n$-vertex $H$-saturated graph.  Erd\H{o}s, Hajnal and Moon~\cite{EHM} studied $\sat(H,n)$, the {\it minimum} number of edges in an $n$-vertex $H$-saturated graph, and determined $\sat(K_t,n)$.  The value of $\sat(H,n)$ is known precisely for very few choices of $H$, and the best upper bound on $\sat(H,n)$ for general $H$ appears in \cite{KT}. It remains an interesting problem to determine a non-trivial lower bound on $\sat(H,n)$.  For a thorough survey of results on the $\sat$ function, we refer the reader to~\cite{SatSurvey}.  

The focus of this paper is the study of ${\mathcal F}$-saturated subgraphs of a general graph.  Specifically, let $G$ be a fixed graph and let ${\mathcal F}$ be a family of graphs.  A subgraph $J$ of $G$ is \textit{${\mathcal F}$-saturated} if no member of ${\mathcal F}$ is a subgraph of $J$, but for any edge $e$ in $E(G)-E(J)$, some element of ${\mathcal F}$ is a subgraph of $J+e$.  We let $\ex({\mathcal F},G)$ and $\sat({\mathcal F},G)$ denote the maximum and minimum size of an ${\mathcal F}$-saturated subgraph of $G$, respectively.  If no element of ${\mathcal F}$ is a subgraph of $G$, then $\sat({\mathcal F},G) = \ex({\mathcal F}, G) = |E(G)|$.  Note as well that $\sat(H,n) = \sat(H, K_n)$ and $\ex(H,n) = \ex(H,K_n)$.        

The problem of determining $\sat({\mathcal F},G)$ for general $G$ was first proposed in \cite{EHM} and Erd\H{o}s notably studied $\ex(K_3,G)$ (amongst other related problems) in \cite{Erd}.  Subsequently Bollob\'{a}s \cite{BolBip1, BolBip2} and Wessel \cite{WesBip1,WesBip2} independently determined $\sat(K_{a,b},K_{m,n})$ as a corollary to results on a related, but more specific problem.  These results were extended to the setting of $k$-partite, $k$-uniform hypergraphs by Alon \cite{AlonMultHyp} and were also generalized by Pikhurko in his Ph.D. Thesis \cite{PikThesis}.  Additionally several bounds and exact results for $\sat(P_k,K_{m,n})$ and $\sat(Q_2,Q_k)$, where $Q_k$ denotes the $k$-dimensional hypercube, were given in \cite{DudBipPath} and \cite{CubeSat}, respectively.   The structure of ${\mathcal F}$-saturated subgraphs of a general graph were also examined via a combinatorial game in \cite{FHJGame}.

In this paper we study $\sat(K_t,K_k^n)$.  We determine $\sat(K_3,K_k^n)$ for $k\ge 4$ when $n$ is large enough, and $\sat(K_3,K_3^n)$ for all values of $n$.  For $t\ge 4$, we also provide constructions of $K_t$-saturated subgraphs of $K_k^n$ with few edges.  

The corresponding problem of determining $\ex(K_3,K_k^n)$ has received considerable attention recently.  When determining the maximum size of an $H$-free subgraph of a complete multipartite graph, frequently one studies the minimal number of edges joining any two partite sets rather than the total number of edges in the subgraph.  Consequently, results on the maximum size of $H$-free subgraphs of multipartite graphs are expressed in terms of edge-densities.  In 2006, Bondy, Shen, Thomass\'e, and Thomassen~\cite{BSTT} determined the maximum edge-density of triangle-free subgraphs of complete tripartite graphs.  Furthermore, they gave bounds on the edge density that guarantees that a subgraph of an infinite-partite graph with finite parts contains a triangle.  Pfender~\cite{Pfender} extended these results, determining the maximum density of a $K_k$-free subgraph of an $\ell$-partite graph for large enough $\ell$.  In contrast to the results on the extremal function in multipartite graphs, our results for $K_3$-saturated subgraphs of $K_k^n$ cannot be meaningfully expressed in terms of edge densities, as we demonstrate that the minimum saturated graphs often have edge density tending to zero within certain pairs of partite sets.

\section{$K_3$-saturated subgraphs of $K_k^n$}

In this section we examine $K_3$-saturated subgraphs of $K_k^n$.
In particular, for $n$ large enough we determine $\sat(K_3,K_k^n)$ for all $k$, and we determine $\sat(K_3,K_3^n)$ for all values of $n$.
First we provide two constructions for $K_3$-saturated subgraphs of $K_k^n$, either of which can be optimal depending on the relative sizes of $k$ and $n$.

\begin{construction}\label{exk>n}
Begin with the complete bipartite graph joining $V_1$ and $V_2$ and remove the edge $v_1^1v_1^2$.
Then join each vertex in $V_3\cup\ldots\cup V_k$ to both $v_1^1$ and $v_2^1$ (see Figure~\ref{K3pics}).
We call this graph $G_1$.
Thus,
 \[
  E(G_1)=\{ v_m^1v_i^j: 1\le m \le 2, 3\le i\le k, 1\le j\le n\}\cup \{ v_1^iv_2^j:i+j\ge 3\}.
 \]
\end{construction}

\begin{construction}\label{exn>k}
First join $V_1\setminus \{v_1^1\}$ to $v_2^1$ and $v_3^1$, join $V_2\setminus \{v_2^1\}$ to $v_1^1$ and $v_3^1$, and join $V_3\setminus \{v_3^1\}$ to $v_1^1$ and $v_2^1$.
Then join each vertex in $V_4\cup\ldots\cup V_k$ to $v_1^1$, $v_2^1$, and $v_3^1$ (see Figure~\ref{K3pics}).
We call this graph $G_2$.
Thus,
 \begin{align*}
  E(G_2)&=\{ v_m^1v_i^j: 1\le m\le 3, 4\le i\le k, 1\le j\le n\}\\
  &\qquad \cup \left\{ v_m^1v_i^j:(m,i)\in\{(1,2),(1,3),(2,3)\},j\ge 2\right\}.
 \end{align*}
\end{construction}

\begin{figure}\label{K3pics}
\begin{center}
\includegraphics[scale=.4]{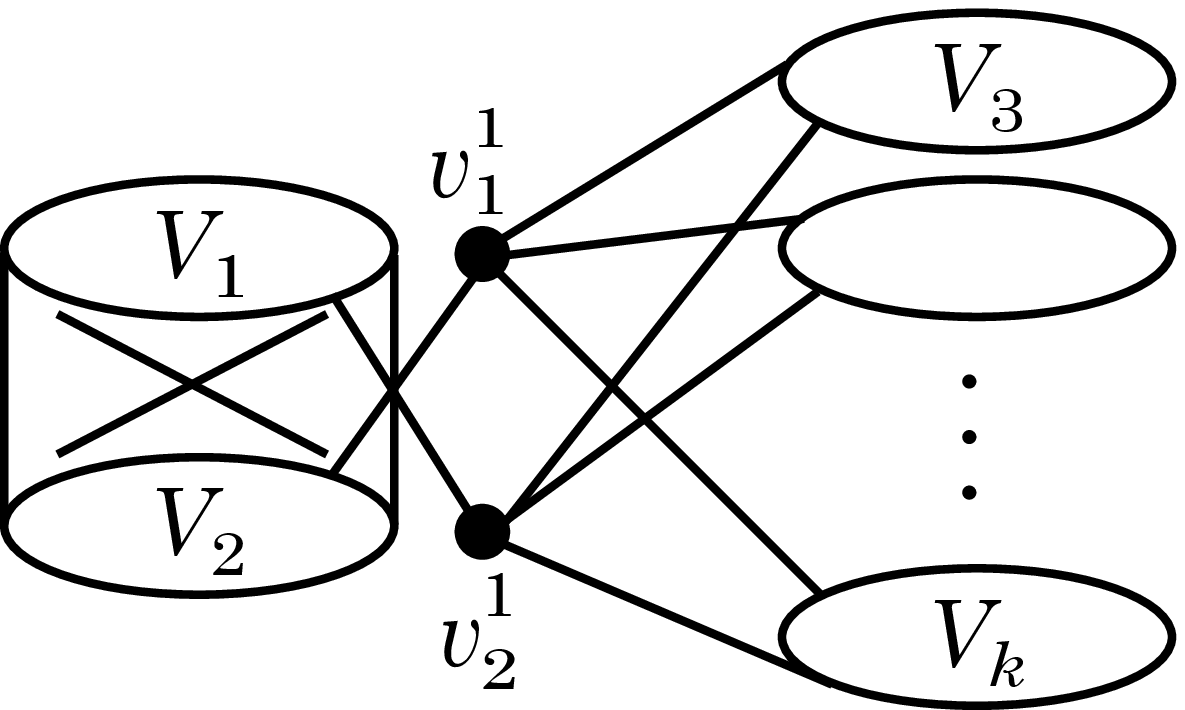}\hspace{1in}\includegraphics[scale=.4]{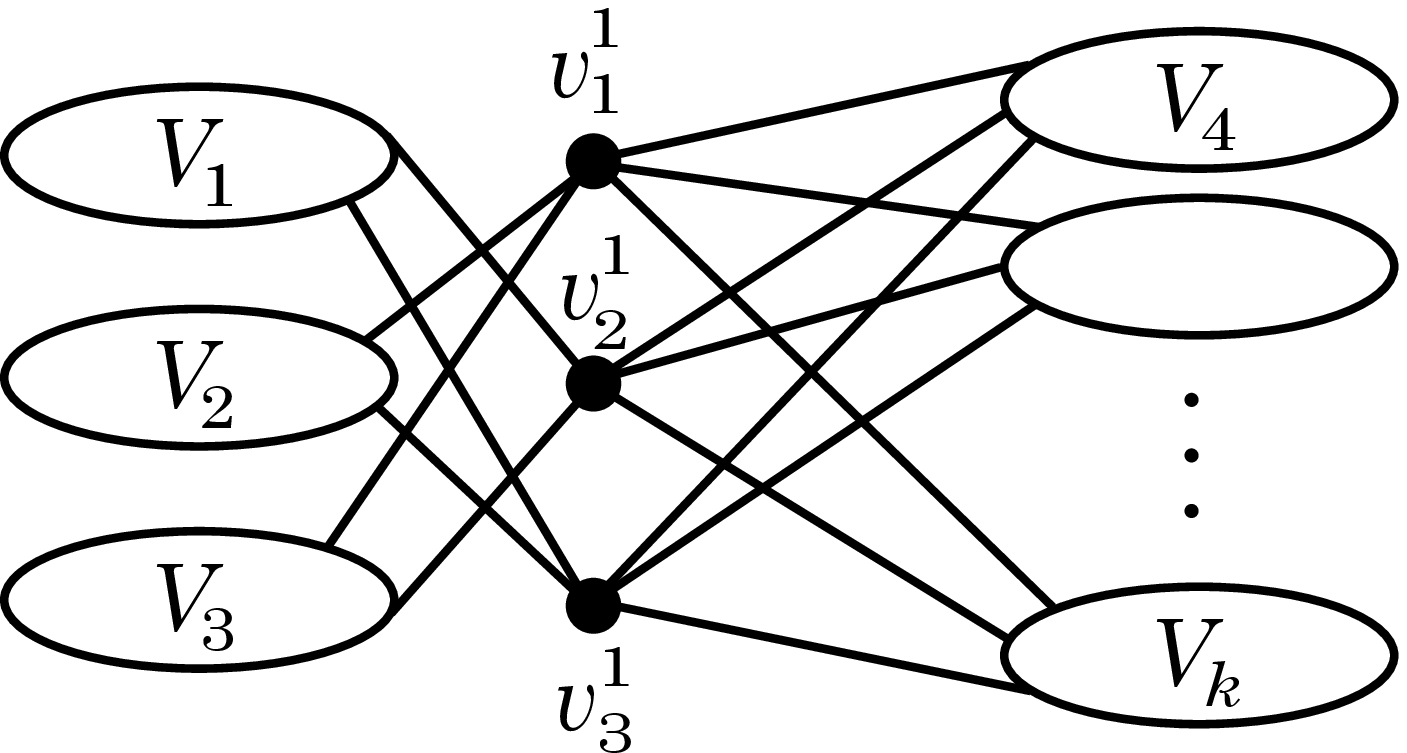}
\caption{Constructions~\ref{exk>n} (left) and \ref{exn>k} (right).}
\end{center}
\end{figure}

\begin{lemma}\label{K3ub}
The graphs in Constructions~\ref{exk>n} and~\ref{exn>k} are $K_3$-saturated subgraphs of $K_k^n$, and thus
\[\sat(K_3,K_k^n)\le		\begin{cases}
											2kn+n^2-4n-1			& \text{ if }k\ge n-1+5/n\\
											3kn-3n-6 				& \text{ if }k<n-1+5/n.
											\end{cases}
\]
\end{lemma}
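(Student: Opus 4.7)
The plan is to verify in turn that: (i) the edge counts of $G_1$ and $G_2$ match the given expressions; (ii) both graphs are triangle-free; (iii) both are $K_3$-saturated; and finally (iv) comparing the two resulting upper bounds yields the piecewise formula. Steps (i) and (iv) are routine arithmetic --- in particular (iv) reduces to observing $|E(G_1)| - |E(G_2)| = n(n-k-1)+5$, which is nonpositive exactly when $k\ge n-1+5/n$.

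For triangle-freeness of $G_1$, I would observe that every vertex in $V_3\cup\cdots\cup V_k$ has all of its $G_1$-neighbors inside $\{v_1^1,v_2^1\}$. Hence any triangle would have to either lie entirely in $V_1\cup V_2$ (impossible, since a triangle needs three parts) or consist of one vertex $w\in V_3\cup\cdots\cup V_k$ together with $v_1^1$ and $v_2^1$; the latter fails because the edge $v_1^1v_2^1$ was deleted. A parallel argument handles $G_2$ with the three-element ``hub'' set $H=\{v_1^1,v_2^1,v_3^1\}$: every vertex of $V(K_k^n)\setminus H$ has all its $G_2$-neighbors in $H$, the three hubs form an independent set, and so neither of the only possible triangle shapes (three hubs; or one hub plus two non-hubs joined by an edge) can occur.

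For $K_3$-saturation of $G_1$, the key is that every vertex $u\notin\{v_1^1,v_2^1\}$ is adjacent in $G_1$ to the hub lying in a part different from $u$'s own (and those in $V_3\cup\cdots\cup V_k$ are adjacent to both hubs). Consequently, any non-edge $uw\in E(K_k^n)\setminus E(G_1)$ either equals $v_1^1v_2^1$, in which case any $v_i^j$ with $i\ge 3$ is a common neighbor, or has at least one endpoint in $V_3\cup\cdots\cup V_k$; in the latter case that endpoint is adjacent to both hubs while the other endpoint is adjacent to at least one, giving a triangle through a hub in a suitable third part. The analogous argument for $G_2$ uses $H$: for a non-hub $u$, the hubs adjacent to $u$ are exactly those not in $u$'s part, so for any two non-hub vertices $u,w$ in different parts the set of common hub-neighbors consists of the hubs in neither $u$'s nor $w$'s part, which is non-empty (exactly one hub if $u,w\in V_1\cup V_2\cup V_3$, and at least two hubs if $u$ or $w$ lies in $V_4\cup\cdots\cup V_k$). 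Each of the three missing edges inside $H$ is closed off by a non-hub vertex in the third hub-part (e.g.\ $v_3^2$ is a common neighbor of $v_1^1$ and $v_2^1$).

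The main obstacle is keeping the case analysis in step (iii) exhaustive: one must enumerate every non-edge of $G_1$ and $G_2$ and exhibit an explicit triangle it closes. Once the hub structure is isolated this becomes mechanical, since the ``every non-hub is adjacent to the hubs outside its part'' property immediately delivers a common neighbor in each case. The arithmetic in step (iv) then combines the two edge counts into the stated piecewise bound.
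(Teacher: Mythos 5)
Your proposal is correct and follows the same route as the paper, which simply declares the two constructions ``clearly'' $K_3$-saturated and then records the edge counts $|E(G_1)|=2kn+n^2-4n-1$ and $|E(G_2)|=3kn-3n-6$ and their comparison. Your hub-based verification of triangle-freeness and saturation supplies exactly the routine details the paper omits, and your threshold computation $|E(G_1)|-|E(G_2)|=n(n-k-1)+5$ matches the stated range.
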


\begin{proof}
The graphs in Constructions~\ref{exk>n} and~\ref{exn>k} are clearly $K_3$-saturated subgraphs of $K_k^n$, and 
\[|E(G_1)|=2kn+n^2-4n-1\]
and 
\[|E(G_2)|=3kn-3n-6.\]
Furthermore, $|E(G_1)|\le |E(G_2)|$ in the range given.
\end{proof}

We now determine $\sat(K_3,K_k^n)$ when $k\ge 4$ and $n$ is sufficiently large.

\begin{theorem}\label{K3main}
If $k\ge 3$ and $n\ge 100$, then
\[\sat(K_3,K_k^n)=\min\{2kn+n^2-4k-1, 3kn-3n-6\}.\]
Further, equality is only attained by the graphs in Constructions~\ref{exk>n} and~\ref{exn>k}, respectively.
\end{theorem}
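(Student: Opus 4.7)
The upper bound is given by Lemma~\ref{K3ub}, so my plan is to prove the matching lower bound together with the uniqueness of the two constructions. Let $G$ be an arbitrary $K_3$-saturated subgraph of $K_k^n$. Two structural facts will drive the entire argument: first, $G$ is triangle-free, so $N(w)$ is an independent set for every vertex $w$; second, every non-edge $uv$ of $K_k^n$ that is missing from $G$ has a common neighbor in $G$, which necessarily lies in a third part.

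The first step is to pick $v$ of minimum degree $\delta$, say $v \in V_1$, and set $N := N(v) \subseteq V_2 \cup \cdots \cup V_k$. Applying the saturation condition to each non-neighbor $u \in V_j$ of $v$ forces $u$ to be adjacent to some $w \in N \setminus V_j$, and summing over $j$ yields
\[
\sum_{w \in N} d(w) \ge (k-1)n - \delta,
\]
so $N$ contains a vertex of degree at least $((k-1)n - \delta)/\delta$. A complementary double-count of cherries, using triangle-freeness and the fact that every missing $K_k^n$ edge has a witness, gives
\[
\sum_{w \in V(G)} \binom{d(w)}{2} \ge \binom{k}{2} n^2 - |E(G)|.
\]
Combining these with the handshake identity $\sum_w d(w) = 2|E(G)|$ should rule out $\delta \ge 4$ for $n \ge 100$: if every degree were at least $4$, then either $|E(G)|$ would already exceed $\min\{2kn+n^2-4n-1,\,3kn-3n-6\}$, or Cauchy--Schwarz applied to $\sum_w d(w)^2$ would force at least one hub of degree $\Omega(kn)$, which can then be removed and the argument iterated on a smaller $K_3$-saturated configuration.

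Since $\delta(G_1) = \delta(G_2) = 2$, the heart of the argument is the case $\delta = 2$, where I would write $N = \{w_1, w_2\}$ and split on whether $w_1, w_2$ lie in the same part. If they do, say $w_1, w_2 \in V_2$, then saturation on the non-neighbors of $v$ forces $V_3 \cup \cdots \cup V_k \subseteq N(w_1) \cup N(w_2)$, while applying saturation to pairs $(x,y) \in (V_1 \setminus \{v\}) \times (V_2 \setminus \{w_1, w_2\})$ forces the $V_1$--$V_2$ subgraph of $G$ to be complete bipartite minus at most one edge, recovering the structure of $G_1$. If instead $w_1 \in V_2$ and $w_2 \in V_3$, then non-neighbors of $v$ in $V_2 \setminus \{w_1\}$ can only be witnessed by $w_2$, and symmetrically, which forces $d(w_1), d(w_2) \ge (k-1)n - O(1)$; re-running the minimum-degree argument starting from a degree-$2$ vertex in $V_2$ or $V_3$ then produces a third hub $w_3$ and recovers $G_2$. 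The case $\delta = 3$ with $N = \{w_1, w_2, w_3\}$ is handled similarly, with the three $w_i$'s spread across three distinct parts.

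The main obstacle will be the uniqueness analysis: once the hub set has been identified, one must show that any deviation from the canonical structure either creates a triangle or strictly increases $|E(G)|$. This requires careful bookkeeping of how edges must be distributed among the non-hub vertices, especially along the boundary between the ``bipartite'' contribution in Construction~\ref{exk>n} and the ``third hub'' contribution in Construction~\ref{exn>k}. The hypothesis $n \ge 100$ enters precisely at the crossover $k \approx n - 1 + 5/n$, where the two contributions are comparable; $n$ must be large enough that lower-order error terms in the cherry count and the hub-degree estimates do not obscure the comparison between $2kn + n^2 - 4n - 1$ and $3kn - 3n - 6$.
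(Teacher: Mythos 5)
There is a genuine gap, and it sits exactly where the paper does almost all of its work. Your plan disposes of $\delta\ge 4$ by a cherry count plus ``remove a hub of degree $\Omega(kn)$ and iterate,'' and of $\delta=3$ by saying it is ``handled similarly'' to $\delta=2$ with three hubs in three parts. Neither step survives scrutiny. First, deleting a high-degree vertex from a $K_3$-saturated graph does not leave a $K_3$-saturated graph, so there is nothing to iterate on. Second, and more importantly, a $K_3$-saturated subgraph with $\delta(G)=3$ need not have a small hub set at all: there can be several independent degree-$3$ vertices whose neighborhoods overlap only partially, and no local argument forces a vertex of degree $\Omega(kn)$. The paper handles all of $\delta\ge 3$ in a separate lemma whose only conclusion is the inequality $|E(G)|\ge 3kn-3n$ (strictly above both constructions), proved by building a maximal independent set $\tilde S$ of vertices of degree at most $5$ with controlled pairwise neighborhood intersections, bounding $|E(N(S))|$ from below via the sets $Z$, $Y_i(w)$, and an explicit optimization; this is precisely where the hypothesis $n\ge 100$ is consumed. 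Your sketch contains no substitute for this, and the cherry inequality $\sum_w\binom{d(w)}{2}\ge\binom{k}{2}n^2-|E(G)|$ by itself only yields the existence of one large-degree vertex, which is far from a contradiction.

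The $\delta=2$ case is closer in spirit to the paper, but your dichotomy is miscalibrated. For $n\ge 3$ a degree-$2$ vertex $v\in V_3$ cannot have both neighbors in one part: if $N(v)=\{w_1,w_2\}\subseteq V_1$, the other vertices of $V_1$ could never acquire a common neighbor with $v$. So both neighbors always lie in distinct parts, in $G_1$ (hubs $v_1^1\in V_1$, $v_2^1\in V_2$) just as in $G_2$, and the same-part/different-part split cannot separate the two extremal graphs. The correct branching, after normalizing $N(v_3^1)=\{v_1^1,v_2^1\}$, is on the number of vertices of $V_3$ adjacent to neither $v_1^1$ nor $v_2^1$: zero such vertices forces the $G_1$ structure, exactly one forces $G_2$ (that vertex becomes the third hub), and two or more is shown to cost too many edges via a discharging argument. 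You would need to rebuild your case analysis around that invariant before the uniqueness claim can be completed.
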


To prove Theorem~\ref{K3main}, we consider two cases, depending on the minimum degree of a $K_3$-saturated subgraph of $K_k^n$ with the minimum number of edges. Each of the cases is treated in a separate lemma.

\begin{lemma}
If $k\ge 3$, $n\ge 100$,   
and $G$ is a $K_3$-saturated subgraph of $K^n_k$ with minimum degree $\delta(G)\ge 3$, 
then $|E(G)|\ge 3kn-3n$.
\end{lemma}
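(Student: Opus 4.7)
My plan is to combine a ``distance-two covering'' inequality implied by $K_3$-saturation with the minimum degree hypothesis to lower-bound $|E(G)|$. For every $v\in V(G)$, I would first prove that
\[
\sum_{u \in N(v)} d(u) \ge (k-1)n.
\]
This holds because the $(k-1)n - d(v)$ non-neighbors of $v$ lying outside its own partite set must, by $K_3$-saturation, each share a common neighbor with $v$, and hence all lie in the second neighborhood of $v$, which has size at most $\sum_{u \in N(v)}(d(u)-1)$.

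If $\delta(G) \ge 6$ then $2|E(G)| = \sum_v d(v) \ge 6kn$, so $|E(G)| \ge 3kn > 3kn-3n$ and we are done. So henceforth assume $\delta(G) \in \{3,4,5\}$ and fix a vertex $v$ with $d(v) = \delta(G)$. Applying the key inequality at $v$ shows that one of the at most five neighbors of $v$ has degree at least $(k-1)n/5$. A short case analysis on how $N(v)$ distributes across partite sets---noting that $N(v)$ cannot lie in a single partite set, since then the non-neighbors of $v$ in that set would remain uncovered---further pins down one or more neighbors of $v$ whose degree is close to $n$ or even $(k-1)n$, since such a neighbor must single-handedly cover all of $v$'s non-neighbors in each partite set not represented in $N(v)$. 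I expect this to produce a ``hub'' set $X \subseteq V(G)$ of at most three vertices spanning distinct partite sets, with $\sum_{x \in X} d(x)$ close to $3(k-1)n$.

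Given such a hub set, the desired estimate follows from
\[
2|E(G)| = \sum_{x \in X} d(x) + \sum_{w \notin X} d(w) \ge \sum_{x \in X} d(x) + 3(kn-|X|),
\]
combined with the lower bound on $\sum_{x \in X} d(x)$.

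The main obstacle is the hub extraction step. The key inequality by itself furnishes only a single high-degree neighbor of $v$, giving roughly $|E(G)| \gtrsim 2kn$---still short of the target $3kn - 3n$. Closing this gap demands iterating the argument at other low-degree vertices (ideally placed in different partite sets) and carefully handling degenerate configurations in which fewer than three hubs would appear. The hypothesis $\delta(G) \ge 3$ is essential here, since with $\delta = 2$ the graphs $G_1$ and $G_2$ from Constructions~\ref{exk>n} and~\ref{exn>k} become available and already achieve fewer edges than the claimed bound.
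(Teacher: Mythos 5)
Your opening inequality $\sum_{u\in N(v)}d(u)\ge (k-1)n$ is correct and is essentially the same distance-two covering observation that drives the paper's Claim~1, and the reduction to $\delta(G)\le 5$ is also how the paper begins. But the argument has a genuine gap exactly where you flag ``the main obstacle'': the hub extraction. Applied to a single vertex $v$ of degree at most $5$, the covering inequality only yields $\sum_{u\in N(v)}d(u)\ge (k-1)n$, which after adding $3(kn-5)$ for the remaining vertices gives roughly $|E(G)|\ge 2kn-\tfrac{n}{2}$ --- a full factor short of $3kn-3n$. Your proposed fix, that some neighbor of $v$ must ``single-handedly cover'' all non-neighbors of $v$ in a partite set not represented in $N(v)$, is false: if $v\in V_1$ has neighbors in $V_2$, $V_3$, $V_4$, the non-neighbors of $v$ in $V_5$ need only be covered by \emph{some} neighbor of $v$, and the load can be split arbitrarily among them. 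More fundamentally, nothing forces the existence of three vertices of total degree near $3(k-1)n$: the covering duties of the many low-degree vertices can be distributed over a large set of medium-degree vertices, and no small hub need emerge. Iterating at other low-degree vertices does not obviously help, because their neighborhoods may coincide or overlap heavily, in which case the covering inequality cannot be ``tripled.''

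This distributed case is precisely what the paper's proof is built to handle, and it constitutes almost all of the work: a maximal independent set $\tilde{S}$ of low-degree vertices with controlled neighborhood overlaps is constructed greedily; Claim~1 shows at most three members of $\tilde S$ can have pairwise disjoint neighborhoods, so the neighborhoods concentrate in a set $X=N(S)$; Claims~2 and~2a reduce the problem to showing $|E(X)|\ge 3(|X|-n)$; and the sets $Z$, $Y_i$, and $Y_i(w)$, together with the labelled-pair count and the quadratic optimization in $d$ and $|X|$, supply that bound. None of this is a ``short case analysis,'' and it is not replaceable by the three-vertex hub you posit. As written, your proposal proves only a bound of order $2kn$ and leaves the essential step as an unverified (and, in the form stated, incorrect) structural claim.
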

\begin{proof}
We proceed by contradiction, so suppose that $|E(G)|< 3kn-3n$. Clearly, $G$ has minimum degree at most five, as otherwise $|E(G)|\ge 3kn$.
\begin{claim}\label{cl1}
$G$ does not contain four independent vertices of degree at most $5$ with pairwise disjoint neighborhoods. 
\end{claim}
Suppose that $u_1,u_2,u_3$ and $u_4$ are independent vertices with pairwise disjoint neighborhoods. Since $G$ is saturated and the addition of the edge $u_iu_j$ cannot create a triangle in $G$, it must be that $u_1,u_2,u_3,u_4\in V_i$ for some $i$.  Furthermore every vertex $y\in V(G)\setminus V_{i}$ 
has a neighbor in $N[u_j]$ for $1\le j\le 4$. Thus,  
\[
|E(G)|\ge 4kn-4n-\tfrac32(d(u)+d(v)+d(w)+d(x))\ge 4kn-4n-30, 
\]
where the last term addresses the double counting of edges between the disjoint neighborhoods of $u_1$, $u_2$, $u_3$, and $u_4$.
For $n\ge 15$, this is a contradiction.

Throughout the remaining claims, let $\tilde{S}$ be a maximal set of vertices with the following properties:
\begin{enumerate}
\item $\tilde{S}$ is an independent set,
\item $\tilde{S}$ contains no vertex of degree $6$ or larger, and
\item For every $u\in \tilde{S}$, we have $|N(u)\cap N(\tilde{S}-u)|\le 5-d(u)$.\label{prop}
\end{enumerate}
A set $\tilde{S}$ with the above properties can easily be found by a greedy search as follows. First, greedily find a maximal independent set $\tilde{S}$ of vertices of degree $3$ without respect for property~\eqref{prop}. Then, add vertices of degree $4$ from $V\setminus N[\tilde{S}]$ to $\tilde{S}$ with property~\eqref{prop} one-by-one.
If the addition of such a vertex $u$ prompts another previously added vertex $v$ of degree $d(v)=4$ to lose property~\eqref{prop}, then $v$ is the only vertex in $\tilde{S}$ with $N(u)\cap N(v)\ne\emptyset$.
Delete that vertex $v$ from $\tilde{S}$, with the consequence that then $N(u)\cap N(\tilde{S}-u)=\emptyset$. Note that with each additional vertex, $|\tilde{S}|$ grows by one, or $|\tilde{S}|$ stays the same and the number of vertices $u$ with $N(u)\cap N(\tilde{S}-u)=\emptyset$ increases by one. As there can be at most three such vertices by Claim~\ref{cl1}, $|\tilde{S}|$ has to grow by one at least once for every four steps. Thus, this process will terminate, and all vertices $u$ of degree $4$ that remain in $V\setminus N[\tilde{S}]$ have at least two neighbors in $N(\tilde{S})$. Now add vertices $u\in V\setminus N[\tilde{S}]$ of degree $5$ with $N(u)\cap N(\tilde{S})=\emptyset$ one-by-one. This does not affect property~\eqref{prop} for any of the previous vertices. Finally, remove all vertices $u$ of degree $3$ from $\tilde{S}$ one-by-one for which $N(u)\subseteq N(\tilde{S}-u)$. Note that this does not change $N(\tilde{S})$, as each of these removed vertices has all three of its neighbors in $N(\tilde{S})$.
Let $\tilde{X}=N(\tilde{S})$, and $\tilde{L}=V\setminus (\tilde{S}\cup\tilde{X})$.   
\begin{claim}\label{cl2}
$|E(\tilde{X})|< 3(|\tilde{X}|-n)$, so that in particular $|\tilde{X}|\ge n+1$.
\end{claim}
Assume otherwise, and observe that the conditions on $\tilde{S}$ imply that for every $v\in\tilde{L}$, $d_{\tilde{X}}(v)+\tfrac12d_{\tilde{L}}(v)\ge 3$.  We therefore have that
\begin{align*}
|E(G)|\ge&~ |E(\tilde{X})|+|E(S,\tilde{X})|+|E(\tilde{L},\tilde{X})|+|E(\tilde{L})|\\
\ge&~ 3|\tilde{X}|-3n+3|\tilde{S}|+\sum_{v\in \tilde{L}}(d_{\tilde{X}}(v)+\tfrac12d_{\tilde{L}}(v))\\
\ge&~ 3|\tilde{X}|-3n+3|\tilde{S}|+3|\tilde{L}|\\
=&~3kn-3n,
\end{align*}
a contradiction showing the claim.   

Now, let $S\subseteq \tilde{S}$ be the set of vertices $s\in \tilde{S}$ with $N(s)\cap N(\tilde{S}-s)\ne \emptyset$ and let $X=N(S)$.
\begin{claim}\label{cl2a}
$|E(X)|< 3(|X|-n)$.
\end{claim}
Let $S'$ be the set of vertices $s$ satisfying $N(s)\cap N(\tilde{S}-s)= \emptyset$, and let $|S'|=m$.
If $S'=\emptyset$, then the claim follows immediately from~\ref{cl2}.
If $S'\neq \emptyset$, then there is a vertex $s\in S'$ such that $N(s)\cap X=\emptyset$.
Thus all vertices in $\tilde{S}$ must be in the same partite set $V_{i}$, and all vertices
in $N(\tilde{S})$ must be in other partite sets.
Furthermore, there is a path of length $2$ joining each vertex in $X$ to each vertex in $S'$, so $|E(X,N(S')|\ge m|X|$.
If $|E(X)|\ge 3(|X|-n)$, it follows that 
\begin{align*}
|E(\tilde{X})|&\ge 3(|X|-n)+m|X|\\
&\ge 3(|\tilde{X}|-5m-n)+m(|\tilde{X}|-5m)\\
& = 3(|\tilde{X}|-n)+m(|\tilde{X}|-5m-15).
\end{align*}
By Claim~\ref{cl1}, $m\le 3$, so this is a contradiction.
%

\begin{claim}\label{clZ}
There exists a set $Z\subset X$ such that $|Z|\le 4$ and $S\subseteq N(Z)$.
\end{claim}
Let $Z\subset X$ be minimum with $S\subset N(Z)$ and suppose first that $S\subset V_{i}$ for some $i$. By the minimality of $Z$, for each $x\in Z$ there is some $s\in S$ such that $N_Z(s)=\{z\}$.  Hence if $|Z|\ge 5$, then every vertex not in $V_{i}$ is adjacent to at least $5$ vertices in $V_{i}\cup N(V_{i})$, and every vertex in $N(V_{i})$ is adjacent to at least one vertex in $V_{i}$. Being careful not to double count edges within $N(V_{i})$, we get
\[
|E(G)|\ge (1+\tfrac42)|N(V_{i})|+5(kn-n-|N(V_{i})|)\ge 3kn-3n,
\]
a contradiction.
Otherwise suppose that for distinct $i$ and $j$ there are vertices $s_i\in V_i$ and $s_j\in V_j$ in $S$.
By property~\eqref{prop}, every vertex in $S\setminus V_i$ is adjacent to one of at most two neighbors of $s_i$.
Similarly, every vertex in $S\setminus V_j$ is adjacent to one of at most two neighbors of $s_j$.
Thus there is a set of at most four vertices in $X$ whose combined neighborhood contains $S$.
%

Let $Z=\{z_{1},z_{2},\ldots,z_{|Z|}\}$.
Let $Y=\{y\in X\setminus Z:|N(y)\cap S|=1\}$, and $W=X\setminus Y$. For $y\in Y$, let $s_{y}\in S$ be the unique vertex in $S$ with $y\in N(s)$. 
Let 
\begin{align*}
S_{i}&=S\cap N(z_{i})\setminus N(\{z_{1},\ldots ,z_{i-1}\}),\mbox{ and}\\
Y_{i}&=Y\cap N(S_{i}).
\end{align*}
For $w\in W\setminus \{z_{1},\ldots,z_{i}\}$, let
\[
Y_{i}(w)=\{y\in Y:\{z_{i},w\}\subset N(s_{y})\}.
\] 
Suppose that $y$ and $y'$ are distinct vertices in $Y_i(w)$ and note that since $s_y$ and $s_{y'}$ share two neighbors in $X$, the conditions imposed on $\tilde{S}$ imply that $d(s_y)=d(s_{y'})=3$.  Consequently, either $yy'\in E(X)$, or both $s_{y'},y\in V_{\ell}$ and $s_{y},y'\in V_{j}$ for some $j$ and $\ell$. Otherwise, we would have $N(s_{y})\cap N(y')=N(s_{y'})\cap N(y)=\emptyset$, a contradiction to the assumption that $G$ is $K_3$-saturated.  Note this implies that we can never have both $y,y'\in V_{j}$ as $V_{j}$ is an independent set. Therefore each vertex in $Y_i(w)$ is adjacent to all but at most one other vertex in $Y_i(w)$, so we have
\[
|E(Y_{i}(w))|\ge\tfrac12|Y_{i}(w)|(|Y_{i}(w)|-2).
\]

Partition $S_i$ into sets $S_i^{(1)},\ldots,S_i^{(d_i)}$ such that $s$ and $s'$ are in the same set if and only if the have a common neighbor in $W\setminus\{z_1,\ldots,z_i\}$.
For each $S_i^{(j)}$ pick a vertex $s_i^{(j)}\in S_i^{(j)}$ and a vertex $y_i^{(j)}\in N(s_i^{(j)})\cap Y$.
Finally, assign each pair $(y_i^{(j)},s_i^{(j)})$ the label $(p,q)$ where $y_i^{(j)}\in V_{p}$ and $s_i^{(j)}\in V_q$.
Given two such pairs $(y_i^{(j)},s_i^{(j)})$ and $(y_i^{(\ell)},s_i^{(\ell)})$ with labels $(p,q)$ and $(p', q')$, respectively, there is an edge joining $y_i^{(j)}$ to $N(s_i^{(\ell)})$ whenever $p\neq q'$.
Thus, when we consider $(y_i^{(j)},s_i^{(j)})$ and $(y_i^{(\ell)},s_i^{(\ell)})$ we count
\begin{center}
\begin{tabular}{ll}
$0$ edges & if $p=q'$ and $q=p'$;\\
$1$ edge & if ($p=q'$ and $q\neq p'$) or ($p\neq q'$ and $q=p'$);\\
$1$ edge & if $p\neq q'$, $q\neq p'$, and $p\neq p'$;\\
$2$ edges & if $p=p'$.
\end{tabular}
\end{center}
Given $p,q\in \{1,\ldots,d_i\}$, let $X_{p,q}$ denote the number of pairs with label $(p,q)$.
Thus there are at least 
\begin{align*}\label{Yedges}
\binom{d_i}{2}+\sum_{(p,q)}\binom{X_{p,q}}{2} -\sum_{p<q}X_{p,q}X_{q,p} &= \binom{d_i}{2}-\frac{1}{2}d_i+\frac{1}{2}\sum_{p<q}(X_{p,q}-X_{q,p})^2\\
& \ge \frac{1}{2}d_i(d_i-2)
\end{align*}
edges incident to $\{y_i^{(1)},\ldots,y_i^{(d_i)}\}$ that do not have both endpoints in $Y_i(w)$ for some $w$.
Consequently there are at least 
\[
\tfrac12d_{i}(d_{i}-2)+\sum_{w}\tfrac12|Y_{i}(w)|(|Y_{i}(w)|-2)
\]
edges incident to $Y_i$, none of which have endpoints in $Y_j$ for $j\neq i$.

Summing up over all $z_{i}$, we get
\[
|E(X)|\ge \sum_{i=1}^{|Z|}\left(\tfrac12d_{i}(d_{i}-2)+\sum_{w }\tfrac12|Y_{i}(w)|(|Y_{i}(w)|-2)\right).
\]
This bound is minimized for fixed $|X|$ when all of the $|N(S_{i})|$ and all of the $|Y_{i}(w)|$ are as equal as possible, and $|Z|$ is maximized, i.e.\ $|Z|=4$. Further, we may modify $W$ as follows so that there are no $s\in S_{i}$ with $N(s)\cap W=\{z_{i}\}$.
If such a vertex has neighborhood $N(s)=\{z_i,y,y'\}$, then add $y$ to $W$, so that $Y_i(y)=\{y'\}$.
If such a vertex has neighborhood $N(s)=\{z_i,y,y',y''\}$, then add $y$ to $W$, so that $Y_i(y)=\{y',y''\}$. In either case, note that $d_i$ is unchanged and we add a term of at most zero to the sum, so the bound will not increase.  Relaxing all integrality constraints and setting $|N(S_{i})|=\tfrac14|X|$ and $|Y_{i}(w)|=\frac1d(\tfrac14|X|-d-1)$, the bound only depends on $|X|$ and $d=d_{i}$ (note that $|X|=|Y|+4(d+1)$). We get
\[
|E(X)|\ge 2d(d-2)+\tfrac2{d}(\tfrac{|X|}{4}-d-1)^2-|X|+4d+4,
\]
and thus
\begin{align*}
|E(X)|-3|X|&\ge 2d(d-2)+\tfrac2{d}(\tfrac{|X|}{4}-d-1)^2-4|X|+4d+4\\
&=8 + 2 d +  2d^2 + \frac{(|X|-4 )^2}{8 d} - 5 |X|.
\end{align*}
Given $d>0$ and $|X|>0$, the right side is minimized for $d=12$ and $|X|=244$, and thus
\[
|E(X)|-3(|X|-n)\ge 3n-300\ge 0,
\]
a contradiction to Claim~\ref{cl2a}.
\end{proof}
Note that we are very generous with our bound on $|E(X)|$. We heavily undercount the edges between $Y_i(w)\cup w$ and $Y_i(w')\cup w'$, and we do not count the edges between $Y_i$ and $Y_j$ at all. Further note that for the case that $S\subseteq V_i$, the bound can easily be improved to
\[
|E(X)|\ge \sum_{i=1}^{|Z|}\left(d_{i}(d_{i}-1)+\sum_{w}\tfrac12|Y_{i}(w)|(|Y_{i}(w)|-1)\right).
\]
For the case that $S$ contains vertices in both $V_i$ and $V_j$, it is not hard to see that $|Z|\le 3$. This can be further lowered to $|Z|=1$ if one treats a few exceptional cases. All these arguments can be used to lower the bound on $n$ in the lemma, but the technicalities involved are too great to justify their exposition here, especially as as one would still need to require $n\ge 20$ or so.

As there cannot be a vertex of degree less than $2$ in a $K_3$-saturated graph, it only remains to consider the case where $\delta(G)=2$ in order to complete the proof of Theorem~\ref{K3main}.  

\begin{lemma}
If $n\ge 8$, $k\ge 3$, and $G$ is a $K_3$-saturated subgraph of $K^n_k$ of minimum size with minimum degree $2$, then $G$ is one of the graphs from Constructions 1 and 2, and in particular
\[
|E(G)|=\min\{ 2kn+n^2-4n-1,3kn-3n-6\}.
\]
\end{lemma}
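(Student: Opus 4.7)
Let $v \in V_\alpha$ be a vertex of degree $2$ with $N(v) = \{n_1, n_2\}$. My plan is first to show that $n_1 \in V_\beta$ and $n_2 \in V_\gamma$ with $\alpha, \beta, \gamma$ pairwise distinct: otherwise, say $n_1, n_2 \in V_\beta$, a vertex $w \in V_\beta \setminus \{n_1, n_2\}$ exists (as $n \ge 8$) that cannot be saturated from $v$, since $N(v) \subseteq V_\beta$ is disjoint from any neighbor $w$ might have. Saturation of each $vw$ with $w \in V \setminus V_\alpha \setminus \{n_1, n_2\}$ then gives $V_\beta \setminus \{n_1\} \subseteq N(n_2)$, $V_\gamma \setminus \{n_2\} \subseteq N(n_1)$, and $V_i \subseteq N(n_1) \cup N(n_2)$ for each $i \ne \alpha, \beta, \gamma$. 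Set $A = N(n_1)$, $B = N(n_2)$, $X = A \cup B$, and $Y = V_\alpha \setminus X$; note that $A$ and $B$ are independent (by triangle-freeness) and $Y \subseteq V_\alpha$.

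The key observation driving the dichotomy is: for any pair $(p, q) \in (B \setminus A) \times (A \setminus B)$ in different partite sets with $pq \notin E(G)$, every common neighbor $z$ must lie in $Y$, since $z \in A$ would produce the triangle $n_1 z q$ and $z \in B$ the triangle $n_2 z p$. I would then split on whether $Y = \emptyset$.

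If $Y = \emptyset$, each such pair $(p, q)$ must itself be an edge of $G$. Writing $a = |(A \setminus B) \setminus V_\gamma|$, $b = |(B \setminus A) \setminus V_\beta|$, and using a Cauchy--Schwarz-style bound $\sum_i |V_i \cap (A \setminus B)| \cdot |V_i \cap (B \setminus A)| \le ab$ to count these forced cross-edges in $X$, I would obtain
\[
|E(G)| \ge 2kn + n^2 - 4n - 1 + (n - 2)(a + b),
\]
so (since $n \ge 8$) $|E(G)| \ge |E(G_1)|$, with equality forcing $a = b = 0$; tracing the equality case then pins the structure down to $G_1$. If instead $Y \ne \emptyset$, let $Z = |Y|$. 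For any $u \in V_i \cap A \cap B$ (with $i \ne \alpha, \beta, \gamma$) of degree $2$, saturating each pair $(y, u)$ with $y \in Y$ forces $yu \in E(G)$, because $N(y) \cap N(u) \subseteq N(y) \cap \{n_1, n_2\} = \emptyset$; hence $u$ is adjacent to all of $Y$. A symmetric analysis at pairs $(y, p)$ with $p \in V_\beta \setminus \{n_1\}$ and $(y, q)$ with $q \in V_\gamma \setminus \{n_2\}$ shows that in a minimum configuration each $y \in Y$ must be adjacent to every vertex of $V_\beta \setminus \{n_1\} \cup V_\gamma \setminus \{n_2\} \cup \bigcup_{i \ne \alpha, \beta, \gamma} V_i$, giving $d(y) = (k-1)n - 2$. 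Counting yields
\[
|E(G)| \ge (2 + Z)(k-1)n - 4Z - 2,
\]
which is minimized at $Z = 1$, giving $|E(G)| \ge 3kn - 3n - 6 = |E(G_2)|$ with equality forcing $G = G_2$.

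The main obstacle will be the $Y \ne \emptyset$ case: one must rule out every ``hybrid'' configuration --- multiple $Y$-vertices, positive excess $a + b$, or replacing a full hub $y$ by smaller witnesses coupled with extra cross-edges in $X$ --- and show each strictly costs more edges than $G_2$. This requires careful bookkeeping of the saturation constraints among all mixed-partite pair types as well as the cross-interactions among the $Y$-vertices.
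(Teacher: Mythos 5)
Your setup coincides with the paper's: fix a degree-$2$ vertex $v$, note that its two neighbours $n_1,n_2$ lie in distinct partite sets, classify every other vertex by its adjacencies to $n_1$ and $n_2$, and split on whether $Y=V_\alpha\setminus\bigl(N(n_1)\cup N(n_2)\bigr)$ is empty. The case $Y=\emptyset$ is handled correctly and essentially as in the paper: the pairs in $\bigl(N(n_1)\setminus N(n_2)\bigr)\times\bigl(N(n_2)\setminus N(n_1)\bigr)$ lying in distinct partite sets are forced edges, so every such vertex outside $V_\beta\cup V_\gamma$ has degree at least $n$ rather than $2$, which yields the bound $|E(G_1)|+(n-2)(a+b)$ and pins down $G_1$ at equality.

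The gap is in the case $Y\neq\emptyset$, which is where essentially all of the work lies. Saturation of a non-edge $yp$ with $y\in Y$ forces only a common neighbour of $y$ and $p$, not the edge $yp$. Your deduction that $yp\in E(G)$ is valid when $N(p)\subseteq\{n_1,n_2\}\cup Y$ (e.g.\ for $p\in N(n_1)\cap N(n_2)$ outside $V_\alpha$), but for $p\in V_\beta\setminus\{n_1\}$ the common neighbour may lie in $N(n_1)\setminus N(n_2)$, so $y$ need not be adjacent to $p$ at all. Consequently the assertion that ``in a minimum configuration each $y\in Y$ is adjacent to every vertex of $V\setminus(V_\alpha\cup\{n_1,n_2\})$'' is exactly the statement that requires proof, and your inequality $|E(G)|\ge(2+Z)(k-1)n-4Z-2$ is derived only under that unproved assumption; you acknowledge this (``one must rule out every hybrid configuration'') but do not carry it out. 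This exclusion is the heart of the lemma: the paper devotes most of its proof to it, showing for $|Y|=1$ that any vertex of $N(n_1)\setminus N(n_2)$ outside $V_\gamma$ that misses the $Y$-vertex must be joined to all of $V_\beta\setminus\{n_1\}$ (hence has degree at least $n$), and then, for $|Y|\ge 2$, running a discharging argument to show that the vertices adjacent to exactly one of $n_1,n_2$ carry total charge at least $3$ each up to an additive loss of $4$, which pushes $|E(G)|$ above $3kn-3n-6$. Without an argument of this kind, the lower bound in the $Y\neq\emptyset$ case is established only for graphs already assumed to have the extremal structure, so the proof is incomplete.
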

\begin{proof}
Without loss of generality, assume that $N(v_3^1)=\{v_1^1,v_2^1\}$. Partition the vertices as follows:
\begin{align*}
A_i&= \{u\in V_i:v_1^1,v_2^1\notin N(u)\},\\
B_i&= \{u\in V_i:v_1^1\in N(u),v_2^1\notin N(u)\},\\
C_i&= \{u\in V_i:v_1^1\notin N(u),v_2^1\in N(u)\},\mbox{ and}\\
D_i&= \{u\in V_i:v_1^1,v_2^1\in N(u)\}.
\end{align*}
Note that $B_1=D_1=C_2=D_2=\emptyset$.  Also, $A_1=\{v_1^1\}$, $A_2=\{v_2^1\}$ and $A_\ell=\emptyset$ for $\ell\ge 4$, as $G$ is $K_3$-saturated and $N(v_i^j)\cap N(v_3^1)\ne \emptyset$ for $1\le i\le 2$ and $2\le j\le n$, and for $4\le i\le k$ and $1\le j\le n$.

Let $A=\bigcup A_i$, $B=\bigcup B_i$, $C=\bigcup C_i$, and $D=\bigcup D_i$. Note that $B\cup D$ and $C\cup D$ are independent sets, lest $G$ contain a triangle. Thus, in particular, $N(B)\subseteq (A\setminus \{v_2^1\})\cup C$ and $N(C)\subseteq (A\setminus\{v_1^1\})\cup B$.

First, consider the case that $A_3=\emptyset$. Then, for every $u\in D$, $N(u)=\{v_1^1,v_2^1\}$. 
Further, the sets $C_i$ and $B_j$ induce a complete bipartite graph for any $i\ne j$ as the intersection of their neighborhoods is empty. Thus, once given the sizes of the $B_i$ and $C_i$, $G$ is completely determined. Note that every vertex in $B\cup C$ has degree at least $n$, whereas vertices in $D$ have degree $2$. Thus, $|E(G)|$ is minimized if $|B_i|=|C_i|=0$ for $3\le i\le k$, which yields the graph in Construction 1.

Now suppose that $|A_3|=1$, say $A_3=\{v_3^2\}$. 
Further suppose that $u\in B\setminus B_2$. If $uv_3^2\notin E(G)$, then $C_1\subseteq N(u)$. If, on the other hand, $uv_3^2\in E(G)$, then $N(u)$ contains a vertex in $C\setminus N(v_2^2)$, as otherwise there is no path of length at most $2$ from $u$ to $v_2^1$. Analogous statements hold for vertices $w\in C\setminus C_1$. Further, $D\setminus D_3\subseteq N(v_3^2)$.
This implies that 
\begin{align*}
|E(G)|\ge& ~3|D\setminus D_3|+n|B\setminus (B_2\cup N(v_3^2))|+n|C\setminus (C_1\cup N(v_3^2))|+\tfrac{n}{2}|(C_1\cup B_2)\setminus N(v_3^2)|\\
&+3|B\cap N(v_3^2)\setminus  B_2|+3|C\cap N(v_3^2)\setminus  C_1|+2|((C_1\cup B_2)\cap N(v_3^2))\cup D_3|\\
 \ge& ~3|D\setminus D_3|+3|B\setminus  B_2|+3|C\setminus  C_1|+2|C_1\cup B_2\cup D_3|\\
=& 3(|V(G)|-|A|) - |C_1\cup B_2\cup D_3|\\
=& 3(kn-3) -(|C_1|+|B_2|+|D_3|)\\
  \ge&~3kn-3n-6.
\end{align*}
Note that equality holds only if $B\subset  N(v_3^2)$ and $C\subset N(v_3^2)$, which then implies that in fact $B=B_2$ and $C=C_1$.  It then follows that $G$ is the graph from Construction 2.

Finally suppose that $|A_3|\ge 2$. 
To count the edges, we assign a charge of $1$ to each edge $uw$ and distribute the charge onto $u$ and $w$ as follows in this order, taking symmetry into account:
\begin{align*}
u\in A && 0\to u, ~1\to w\\
|N(u)\cap A|\ge 3\mbox{ and }|N(w)\cap A|\le 2 && 0\to u, ~1\to w\\
\mbox{otherwise} && .5\to u, .5\to w
\end{align*}

If every vertex in $B\cup C$ receives a total charge of at least $3$, then
\begin{align*}
|E(G)|\ge& 2|D|+|A_3||D\setminus D_3|+3|B|+3|C|\\
\ge& 3kn -3|A|-|D_3|\\
=& 3kn-6-3|A_3|-|D_3|\\
\ge& 3kn-6-3n+2.
\end{align*}
So we suppose that there exists $b\in B_i$ with total charge at most $2.5$. If $N(b)\cap A_3=\emptyset$, then $C_1\subset N(b)$, and $b$ has charge at least $(n+1)/2$, so this is not the case. Thus, $|N(b)\cap A_3|=1$. Let $N(b)\cap A_3=\{a\}$, and let $a'\in A\setminus a$. Let $c\in N(b)\cap C_j$ be $b$'s only neighbor in $C$. Such a vertex must exist as there is a path of length $2$ from $b$ to $a'$. As $b$ has charge only $2.5$, $N(c)\cap A_3=\{a'\}$. Note that this argument also implies that $A_3=\{a,a'\}$. 

We complete this case in a manner similar to when $|A_3|=1$. Let $u\in B_i$ (the case for $w\in C_i$ is symmetric). 
If $N(u)\cap A_3=\emptyset$, then $C_1\subseteq N(u)$, so $u$ has charge at least $(n+1)/2$.
If $N(u)\cap A_3=A_3$,
then $u$ has charge at least $3$ (in fact, exactly $3$). 
If $N(u)\cap A_3= \{a\}$, then $\emptyset\subsetneq C\setminus (N(a) \cup C_i)\subset N(u)$, so $u$ has charge at least $2+|C\setminus (N(a) \cup C_i)|/2$. 
The only way that $u$ has charge less than $3$ in this case is if there exists $w\in C_j$ with $j\ne i$, such that $C\setminus (N(a) \cup C_i)=\{w\}$.
Note that in this case $N(w)\cap A_3=\{a'\}$. The charge of $w$ is at least $2.5$, so the combined charge of $u$ and $w$ is at least $5$.
Now let $U$ be the set of vertices $u'\in B\setminus B_j$ with weight $2.5$ satisfying $N(u')\cap A_3= \{a\}$.
Thus $u'w\in E(G)$, and the combined charge of $U$ and $w$ is at least $3|U|+2$.

Now consider $U'$, the set of vertices $u''\in B_j$ with charge $2.5$ satisfying $N(u'')\cap A_3= \{a\}$.
Thus $N(U')/cap C=\{w'\}$, and as $N(U)\cap C=\{w\}$, it follows that $w\in C_i$.
Further, the total charge of $U'$ and $w'$ is at least $3|U'|+2$.
Very similar conclusions hold for the case of $N(u)\cap A_3= \{a'\}$.
In conclusion, the total charge of $B\cup C$ is at least $3|B\cup C|-4$. Thus,
\begin{align*}
|E(G)|\ge& 2|D|+|A_3||D\setminus D_3|+3|B|+3|C|-4\\
\ge& 3kn -3|A|-|D_3|-4\\
=& 3kn-|D_3|-16\\
\ge& 3kn-n-14\\
>&3kn-3n.
\end{align*}
\end{proof}

When $k=3$, it is much easier to determine $\sat(K_3,K_k^n)$ for all values of $n$.

\begin{theorem}\label{k3tripartite}
$\sat(K_3,K_k^n) = 6n-6$.
\end{theorem}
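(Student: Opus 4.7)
The upper bound $\sat(K_3, K_3^n) \le 6n - 6$ follows directly from Lemma~\ref{K3ub}: Construction~\ref{exn>k} with $k = 3$ gives a $K_3$-saturated subgraph of $K_3^n$ with $3kn - 3n - 6 = 6n - 6$ edges. For the matching lower bound, let $G$ be a $K_3$-saturated subgraph of $K_3^n$; the goal is to show $|E(G)| \ge 6n - 6$. A standard first step is to verify that $\delta(G) \ge 2$: if some vertex $v$ had degree at most one, then one could choose a vertex $w$ in a part disjoint from $N[v]$ so that the non-edge $vw$ admits no common neighbor, violating saturation.

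The main case is $\delta(G) = 2$. I would pick a degree-$2$ vertex and, after relabeling, call it $v_3^1 \in V_3$ with $N(v_3^1) = \{v_1^1, v_2^1\}$; since $G$ is triangle-free, $v_1^1 v_2^1 \notin E(G)$. Saturation applied to each non-edge $v_3^1 u$ with $u \in V_1 \setminus \{v_1^1\}$ forces $u \in N(v_2^1)$, whence $V_1 \setminus \{v_1^1\} \subseteq N(v_2^1)$ and symmetrically $V_2 \setminus \{v_2^1\} \subseteq N(v_1^1)$. Partition $V_3$ into $A_3, B_3, C_3, D_3$ according to adjacency with $\{v_1^1, v_2^1\}$ and split on $|A_3|$, mirroring the $\delta = 2$ argument in the general-$k$ lemma. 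If $|A_3| = 0$, summing degrees while using that $V_1 \setminus \{v_1^1\}$ and $V_2 \setminus \{v_2^1\}$ induce a complete bipartite graph (since no $V_3$-vertex can serve as a common neighbor for pairs in $(V_1 \setminus \{v_1^1\}) \times (V_2 \setminus \{v_2^1\})$ when every vertex of $D_3$ has $N = \{v_1^1, v_2^1\}$) yields $|E(G)| \ge n^2 + 2n - 1 > 6n - 6$ for $n \ge 2$. If $|A_3| = 1$, the saturation constraints force $G$ to coincide with Construction~\ref{exn>k} and give exactly $|E(G)| = 6n - 6$. If $|A_3| \ge 2$, each $a \in A_3$ requires at least one neighbor in $V_1 \setminus \{v_1^1\}$ and one in $V_2 \setminus \{v_2^1\}$ to produce triangles with $v_1^1$ and $v_2^1$; tracking these extra edges drives $|E(G)|$ strictly above $6n - 6$.

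For the case $\delta(G) \ge 3$, the naive bound $|E(G)| \ge \lceil 9n/2 \rceil$ already exceeds $6n-6$ when $n \le 4$. For $n \ge 5$, I would adapt the structural idea behind the first lemma of this section: any vertex $v$ of degree $3$ in a $K_3$-saturated subgraph of $K_3^n$ must have neighbors spanning both of the other two parts (else $v$ would have to be adjacent to all of a single other part, forcing $d(v) \ge n$), and the unique ``minority'' neighbor must then be adjacent to almost all of the ``majority'' part, producing a vertex of degree at least $n-2$. Charging these induced high-degree vertices against the degree-$3$ configurations that created them, while noting that a single high-degree vertex can be ``blamed'' by only boundedly many degree-$3$ vertices, ultimately yields $|E(G)| \ge 6n$, comfortably above $6n-6$. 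The main obstacle is precisely this last step: the paper's lemma for $\delta(G) \ge 3$ requires $n \ge 100$, and a bespoke argument is needed to extend the conclusion to small and moderate $n$ for the specialised value $k = 3$; the delicate part is avoiding overcounting among the handful of forced high-degree vertices when the vertices of degree $3$ may share the same high-degree ``witness.''
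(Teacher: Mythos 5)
Your upper bound is correct and matches the paper (Construction~\ref{exn>k} with $k=3$ gives $6n-6$). The lower bound, however, has a genuine gap, and it sits exactly where you flag it: the case $\delta(G)\ge 3$ for moderate $n$. Your plan is to port the machinery of the two lemmas preceding Theorem~\ref{K3main}, but the first of those requires $n\ge 100$ and the second $n\ge 8$, while the theorem is claimed for all $n$. Your proposed repair --- charging each degree-$3$ vertex against the high-degree ``witness'' it forces --- founders on the overcounting problem you yourself identify: arbitrarily many degree-$3$ vertices of $V_1$ may share the same unique $V_3$-neighbor $b$, so the witnesses cannot simply be counted with multiplicity, and you offer no way around this. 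The subcase $|A_3|\ge 2$ of your $\delta(G)=2$ branch is likewise only asserted (``tracking these extra edges drives $|E(G)|$ strictly above $6n-6$''); in the general-$k$ lemma that subcase needs a full discharging argument. A smaller slip: your justification of $\delta(G)\ge 2$ does not work as written, since a vertex $w$ chosen in the part disjoint from $N[v]$ may well be adjacent to the unique neighbor $u$ of $v$; the correct choice is $w$ in the part \emph{containing} $u$, so that $u$ cannot serve as a common neighbor.

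The paper avoids all of this with a short, self-contained argument special to $k=3$. Let $\delta_i$ be the minimum degree among vertices of $V_i$, attained by $v_i^1$. Every vertex of $V_{i+1}\cup V_{i+2}$ not adjacent to $v_i^1$ must send an edge into $N(v_i^1)$, giving at least $2n-\delta_i$ edges between $V_{i+1}$ and $V_{i+2}$; combined with the $\delta_i n$ edges meeting $V_i$, this forces $\delta_i\in\{2,3\}$ for every $i$. If all $\delta_i=2$, each pair of parts is joined by at least $2n-2$ edges and we are done. In the remaining cases the key observation is that only one witness per pair of parts is needed: each minimum-degree vertex of degree $3$ forces a single vertex $x_{j,\ell}$ incident to at least $n-2$ edges between $V_j$ and $V_\ell$, and a degree-sum computation that splits on whether $x_{1,2}$, $x_{1,3}$, $x_{2,3}$ are distinct yields $|E(G)|\ge 6n-6$. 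This sidesteps the shared-witness issue entirely. To complete your write-up you would need either to reproduce an argument of this kind or to supply a genuinely new treatment of $\delta(G)\ge 3$ for $5\le n\le 99$.
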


\begin{proof}
Observe that $n-1+5/n>3$ for all $n\ge 2$.
Thus $\sat(K_3,K_k^n) \le 6n-6$ by Lemma~\ref{K3ub}.
Through the remainder of the proof we perform all arithmetic modulo $3$.

Let $G$ be a $K_3$-saturated subgraph of $K_3^n$.
Let $\delta_i$ denote the minimum degree in $G$ among the vertices in $V_i$.
Assume that $\delta_1\le \delta_2\le \delta_3$.
Each vertex in $V_i$ either has a neighbor in both $V_{i+1}$ and $V_{i+2}$ or is completely joined to $V_{i+1}$ or $V_{i+2}$; thus $\delta(G)\ge 2$.

Let $v_i^1$ be a vertex in $V_i$ with degree $\delta_i$.
Every vertex in $V_{i+1}\cup V_{i+2}$ that is not adjacent to $v_i^1$ has at least one neighbor among the $\delta_i$ neighbors of $v_i^1$.
Thus there are at least $2n-\delta_i$ edges joining $V_{i+1}$ and $V_{i+2}$.
Furthermore, there are at least $\delta_i n$ edges incident to the vertices in $V_i$.
If $\delta_i \ge 4$, then $E(G)\ge 4n+2n-4 = 6n-4$.
Thus we may assume that $\delta_i\le 3$ for all $i\in\{1,2,3\}$.

If $\delta_1 = \delta_2 = \delta_3 = 2$, then there are at least $2n-2$ edges joining each pair of $V_1$, $V_2$, and $V_3$.
Thus $|E(G)|\ge 6n-6$.

Now suppose that $\delta_1 = 2$ and $\delta_3 = 3$.
Every vertex of degree $2$ in $V_1$ is adjacent to a vertex of degree at least $n$ in $V_3$.
Therefore, there are at least $2n-3$ edges joining $V_1$ and $V_2$, and $V_3$ has degree sum at least $3(n-1)+n$.
Thus $|E(G)|\ge 2n-3+3(n-1)+n = 6n-6$.

Finally assume that $\delta_1 = \delta_2 = \delta_3 = 3$.
A vertex of degree $3$ in $V_i$ has a neighbor that is incident to $n-2$ edges joining $V_{i+1}$ and $V_{i+2}$.
Thus for each $j,l\in\{1,2,3\}$, $j\neq l$, there is a vertex $x_{j,l}$ that is incident to $n-2$ edges joining $V_j$ and $V_l$.
If $x_{1,2}$, $x_{1,3}$ and $x_{2,3}$ are distinct, then $G$ contains three vertices of degree at least $n-1$.
It follows that $|E(G)|\ge \frac{1}{2}(3(n-1) + 3(3n-3)) = 6n-6$.
If, without loss of generality, $x_{1,2} = x_{1,3}$, then $d(x_{1,2})\ge 2n-4$ and $d(x_{2,3})\ge n-1$.
Thus $|E(G)|\ge \frac{1}{2}(3n-5 + 3(3n-2)) = \frac{1}{2}(12n-11)>6n-6$.
\end{proof}

\section{$K_t$-saturated subgraphs for $t\ge 4$}\label{constructions}

In this section we provide constructions of $K_t$-saturated subgraphs of $K_k^n$ of small size for $t\ge 4$.
We start with natural generalizations of Constructions~\ref{exk>n} and \ref{exn>k}.

\begin{construction}\label{exnk}
Let $k\ge 2t-4$, and let $S=\{v_1^1,\ldots,v_{2t-4}^1\}$.
To construct $G_{k,n,t}$, place a complete graph on $S$ and remove the $t-2$-edge matching $\{v_1^1v_2^1,v_3^1v_4^1,\ldots,v_{2t-5}^1v_{2t-4}^1\}$.
Now, for $r\in\{1,3,\ldots,2t-5\}$ completely join $V_r-v_r^1$ and $V_{r+1}-v_{r+1}^1$.
Finally, add all edges from $K_k^n$ joining $S$ and $\overline{S}$.
That is,
\begin{align*}
	E(G_{k,n,t}) = 	&\bigg[ \{v_r^1v_s^1: r\le 2t-4, s\le 2t-4, r\neq s\}\setminus\{v_1^1v_2^1,v_3^1v_4^1,\ldots,v_{2t-5}^1v_{2t-4}^1\}\bigg] \\
									&\cup\{v_r^iv_{r+1}^j: i\ge 2, j\ge 2, r\in\{1,3,\ldots,2t-5\}\}\\
									&\cup\{v_r^iv_s^1: i\ge 2, r\le 2t-4, s\le 2t-4, r\neq s\}\\
									&\cup\{v_r^iv_s^1: i\le n, r> 2t-4, s\le 2t-4\}.
\end{align*}
The number of edges in $G_{k,n,t}$ is
\begin{align*}
	|E(G_{k,n,t})| 	&=\binom{2t-4}{2}-(t-2)+(t-2)(n-1)^2\\
									&\hspace{1.7in}+(2t-4)(2t-5)(n-1)+(2t-4)(k-2t+4)n\\
									&=(t-2)n^2+(2t-4)kn-2(2t-4)n-\binom{2t-4}{2}.
\end{align*}
\end{construction}

\begin{construction}\label{exn>>k}
Let $k\ge 2t-3$, and let $S=\{v_1^1,\ldots,v_1^{2t-3}\}$.
To construct $H_{k,n,t}$, begin by placing a complete graph on $S$ and removing the $2t-3$-cycle with edges $\{v_1^rv_1^s: |r-s|\in\{t-2,t-1\}\}$.
Finally, add all edges from $K_k^n$ joining $S$ and $\overline{S}$.
That is, 
\begin{align*}
	E(H_{k,n,t}) = 	&\bigg[\{v_1^rv_1^s: r\le 2t-3, s\le 2t-3\, r\neq s\}\setminus\{v_1^rv_1^s: |r-s|																		\in\{t-2,t-1\}\}\bigg]\\
								&\cup\{v_i^rv_1^s: i\ge 2, r\le 2t-3, s\le 2t-3, r\neq s\}\\
								&\cup\{v_i^rv_1^s: i\in[n], r> 2t-3, s\le 2t-3\}.
\end{align*}
The number of edges in $H_{k,n,t}$ is
\begin{align*}
|E(H_{k,n,t})|	&= (2t-3)(t-3) + (2t-3)(2t-4)(n-1)+(k-2t+3)(2t-3)n\\
							&= (2t-3)kn-(2t-3)n-(2t-3)(t-1).
							\end{align*}
\end{construction}

It is tedious but straightforward to verify that both $G_{k,n,t}$ and $H_{k,n,t}$ are $K_t$-saturated subgraphs of $K_k^n$ for $k\ge 2t-4$ and $k\ge 2t-3$, respectively.  Consequently, we have the following bound on $\sat(K_k^n,K_t)$ for $t\ge 3$ and $k\ge 2t-3$.

\begin{theorem}\label{generalbnd}
If $t\ge 3$ and $k\ge 2t-3$, then
\begin{eqnarray*}
\sat(K_k^n,K_t)\le \min\left\{
\begin{array}{l}
(2t-4)kn+(t-2)n^2-2(2t-4)n-\binom{2t-4}{2},\\
(2t-3)kn-(2t-3)n-(2t-3)(t-1)
\end{array}
\right\}.
\end{eqnarray*}
\end{theorem}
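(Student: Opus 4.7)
Since the authors have already computed both edge counts inside Constructions~\ref{exnk} and~\ref{exn>>k}, the theorem reduces to two lemmas: $G_{k,n,t}$ is a $K_t$-saturated subgraph of $K_k^n$ whenever $k\ge 2t-4$, and $H_{k,n,t}$ is a $K_t$-saturated subgraph of $K_k^n$ whenever $k\ge 2t-3$. The plan is to prove each, then take the minimum of the two counts.

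For $G_{k,n,t}$, my first step is to establish $K_t$-freeness by a clique analysis organized around the set $S=\{v_1^1,\ldots,v_{2t-4}^1\}$. The subgraph $G[S]$ is $K_{2t-4}$ minus a perfect matching $M$ of size $t-2$, so its cliques correspond to independent sets in $M$ and have size at most $t-2$. A non-special vertex $u\in V_r$ with $r\le 2t-4$ is adjacent to $S\setminus\{v_r^1\}$ and, outside $S$, only to non-special vertices in the matched partner of $V_r$; a non-special vertex in $V_r$ with $r>2t-4$ is adjacent to all of $S$ and to no other non-special vertex. A short case analysis then shows any clique has at most $t-1$ vertices, the extreme case being two non-special vertices from a single matched pair together with $t-3$ special vertices, one from each of the remaining matching edges. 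For saturation, I would classify the missing edges into (i) the matching edges of $M$ inside $S$, (ii) non-edges between non-special vertices in unmatched partite sets, and (iii) non-edges incident to a non-special vertex of $V_r$ with $r>2t-4$ and another non-special vertex. For each type I would exhibit a $K_{t-1}$ adjacent to both endpoints of the missing edge, so that adding the edge produces a $K_t$. The guiding observation is that each endpoint of a missing edge is non-adjacent to at most two special vertices (the ones in its own partite set), so a compatible independent set of size $t-2$ can always be chosen inside $M$ on the remaining $2t-6$ vertices.

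The second step treats $H_{k,n,t}$ analogously, with $S=\{v_1^1,\ldots,v_{2t-3}^1\}$. Here $G[S]=K_{2t-3}\setminus C_{2t-3}$, whose cliques are independent sets in $C_{2t-3}$, of size at most $t-2$. Every non-special vertex is adjacent only to (a subset of) $S$, so any clique contains at most one of them, bounding the clique number by $t-1$. For saturation the missing edges come in two flavors: a cycle edge $v_r^1v_s^1$, whose addition turns $C_{2t-3}$ into a path of length $2t-4$ and so admits an independent set of size $t-1$ in the updated $G[S]$ (pair with any non-special vertex adjacent to those special vertices to obtain $K_t$); and a non-edge between non-special vertices $u\in V_{r_1}$ and $w\in V_{r_2}$, for which I would locate a $(t-2)$-independent set in $C_{2t-3}$ inside $S\setminus\{v_{r_1}^1,v_{r_2}^1\}$ and append $u$ and $w$.

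The main obstacle is the boundary regime in which the index set of $S$ exhausts the partite sets, namely $k=2t-4$ for $G_{k,n,t}$ or $k=2t-3$ for $H_{k,n,t}$, so one cannot use a "free" non-special vertex from a partite set of index exceeding $2t-4$ or $2t-3$. In those sub-cases one must choose a non-special vertex inside a partite set that already contains an element of $S$, and argue that the target $K_{t-1}$ can be selected to avoid the unique special vertex that this non-special vertex fails to see. This is a small but careful finite case analysis, driven by the same independence-in-$M$ or independence-in-$C_{2t-3}$ combinatorics used for the clique bounds; explicitly writing it out is why the authors call the verification "tedious but straightforward."
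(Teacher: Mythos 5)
Your proposal follows exactly the route the paper intends: the paper itself gives no argument beyond the edge counts computed inside Constructions~\ref{exnk} and~\ref{exn>>k} and the remark that verifying $K_t$-freeness and saturation of $G_{k,n,t}$ and $H_{k,n,t}$ is ``tedious but straightforward,'' and your sketch is a correct filling-in of precisely that verification (clique analysis via independent sets in the matching $M$ or in $C_{2t-3}$, case analysis of missing edges, and the boundary cases $k=2t-4$ and $k=2t-3$). The only nitpick is your parenthetical that an endpoint of a missing edge misses ``at most two special vertices (the ones in its own partite set)''---each partite set holds at most one vertex of $S$ in $G_{k,n,t}$, and when the missing edge is itself a matching edge the surviving $2t-6$ special vertices support only a $(t-3)$-clique, but your case (i) handles that correctly since both endpoints then lie in $S$.
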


As $G_{k,n,t}$ and $H_{k,n,t}$ are structurally similar to the unique minimal saturated graphs from Theorem \ref{K3main}, we conjecture that the bound in Theorem~\ref{generalbnd} is sharp when $k$ is sufficiently large relative to $t$ and $n\ge 2$.

The remaining constructions in this section follow the same general approach to building a $K_t$-saturated subgraph of $K_k^n$.  First we select a small set of vertices $S$ and construct on $S$ a $K_t$-free graph that, for each choice of a two partite sets in $K_k^n$, contains a copy of $K_{t-2}$ on $t-2$ vertices not lying in the two selected partite sets.  We then add all edges in $K_k^n$ joining $S$ and $\overline{S}$.  Finally, if necessary, iteratively add edges joining vertices in $S$ provided that these edges do not complete any $t$-cliques.  The resulting graph is a $K_t$-saturated subgraph of $K_k^n$ and the number of edges is on the order of $|S|nk$.

We now turn our attention to are $K_t$-saturated subgraphs of $K_k^n$ for $k\in\{t,\ldots,2t-5\}$ where it seems that there may be a rich structure to the family of minimal $K_t$-saturated subgraphs of $K_k^n$.  We present two additional constructions. The first applies to all values of $t$, $k$, and $n$, while the second applies only when $t$ is even and $k\ge \frac{3}{2}(t-2)$.  

\begin{construction}\label{lexo}
Let $k\ge t$ and construct the graph $F_{k,n,t}$ as follows.  First, list all $t-2$-element subsets of $[t]$ in lexicographic order.  Thus for any $R\in\binom{[t]}{t-2}\setminus\{\{1,\ldots,t-2\}\}$, there is a $t-2$-set $R'$ preceding $R$ that contains the $t-3$ lowest elements of $R$.
Begin by letting $S$ contain one vertex from each of $V_1\ldots,V_{t-2}$ and constructing a $t-2$-clique on those vertices.
For each subsequent set $R$ in the ordering of $\binom{[t]}{t-2}$, add a vertex from $V_{\max(R)}$ to $S$ and join it to a $t-3$-clique in $S$ whose vertices lie in the sets indexed by $R-\max(R)$.
Thus for each set $R\in\binom{[t]}{t-2}$ there is a $t-2$-clique whose vertices lie in the partite sets indexed by $R$.
Next, add all edges from $K_k^n$ joining $S$ and $\overline{S}$.
Finally, iteratively add edges from $K_k^n$ joining vertices in $S$ provided that those edges do not complete any $t$-cliques.
\end{construction}

\begin{construction}\label{tri}
For $t=2m$ with $m\ge 3$, and $k\ge \frac{3}{2}(t-2)$ we construct the graph $I_{k,n,t}$ as follows.
Let $$S=\{v_1^1,v_1^2,v_2^1,v_2^2,\ldots,v_{3(t-2)/2}^1,v_{3(t-2)/2}^2\}$$
and start with the induced subgraph of $K_k^n$ on $S$. 
If $t\equiv2\pmod4$  (see Figure~\ref{tf1}), then for $i\in\{0,\ldots,\frac{t-2}{4}-1\}$, delete the edges of the following triangles:
$$\{v_{6i+1}^1,v_{6i+2}^1,v_{6i+3}^1\},\{v_{6i+4}^1,v_{6i+5}^1,v_{6i+6}^1\},\{v_{6i+1}^2,v_{6i+3}^2,v_{6i+5}^2\},\{v_{6i+2}^2,v_{6i+4}^2,v_{6i+6}^2\}.$$
\begin{figure}
\begin{center}
\includegraphics[scale=.4]{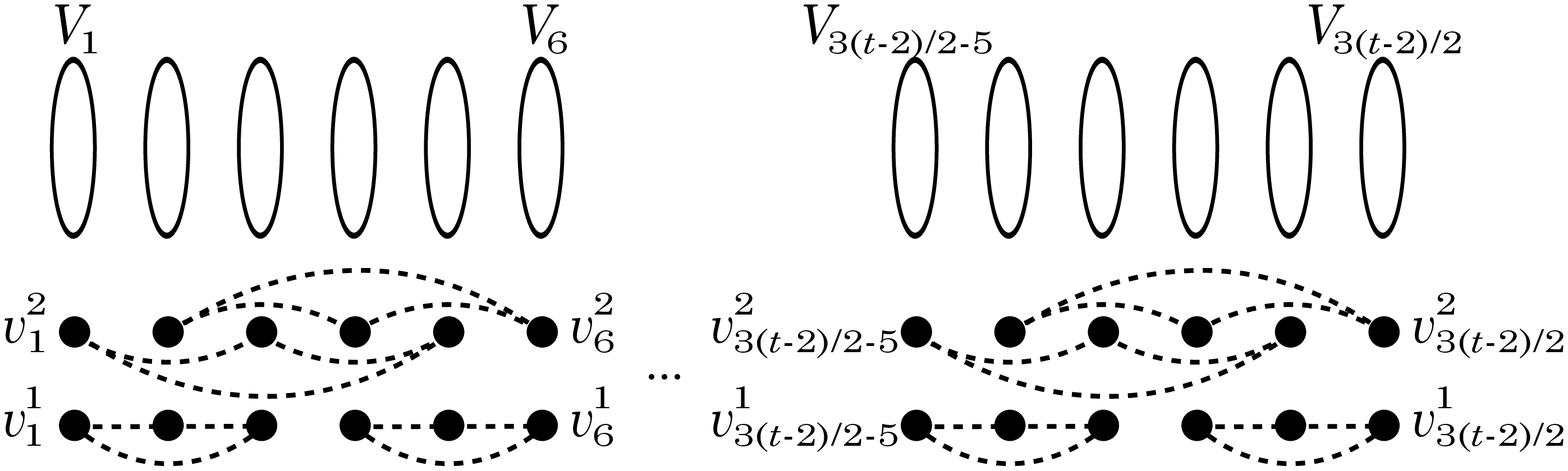}
\caption{Constructing $I_{k,n,t}$: Nonedges in $S$ when $k\equiv2\pmod4$.}\label{tf1}
\end{center}
\end{figure}
If $t\equiv0\pmod4$ (see Figure~\ref{tf2}), then for $i\in \{0,\ldots,\frac{t-4}{4}-1\}$, delete the edges of the triangles 
$$\{v_{6i+1}^1,v_{6i+2}^1,v_{6i+3}^1\},\{v_{6i+4}^1,v_{6i+5}^1,v_{6i+6}^1\},\{v_{6i+1}^2,v_{6i+3}^2,v_{6i+5}^2\},\{v_{6i+2}^2,v_{6i+4}^2,v_{6i+6}^2\}$$
and also delete the edges of the triangles
\begin{align*}
\{v_{\frac{3}{2}(t-2)-8}^1,v_{\frac{3}{2}(t-2)-7}^1,v_{\frac{3}{2}(t-2)-6}^1\},
\{v_{\frac{3}{2}(t-2)-5}^1,v_{\frac{3}{2}(t-2)-4}^1,v_{\frac{3}{2}(t-2)-3}^1\},
\{v_{\frac{3}{2}(t-2)-2}^1,v_{\frac{3}{2}(t-2)-1}^1,v_{\frac{3}{2}(t-2)}^1\},\\
\{v_{\frac{3}{2}(t-2)-8}^2,v_{\frac{3}{2}(t-2)-5}^2,v_{\frac{3}{2}(t-2)-2}^2\},
\{v_{\frac{3}{2}(t-2)-7}^2,v_{\frac{3}{2}(t-2)-4}^2,v_{\frac{3}{2}(t-2)-1}^2\},
\{v_{\frac{3}{2}(t-2)-6}^2,v_{\frac{3}{2}(t-2)-3}^2,v_{\frac{3}{2}(t-2)}^2\}.
\end{align*}
\begin{figure}
\begin{center}
\includegraphics[scale=.4]{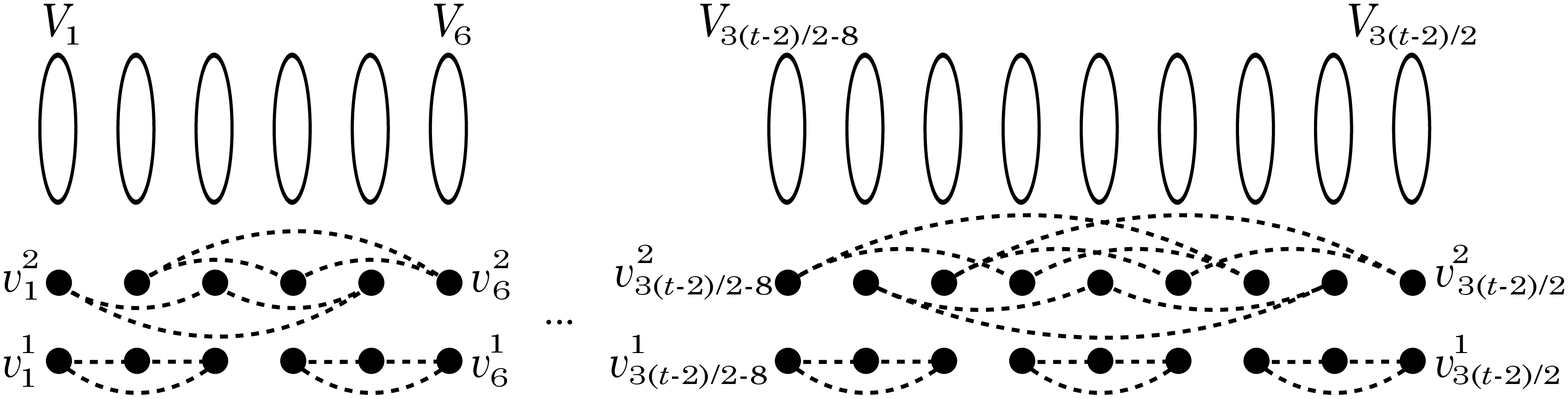}
\caption{Constructing $I_{k,n,t}$: Nonedges in $S$ when $k\equiv0\pmod4$.}\label{tf2}
\end{center}
\end{figure}
To complete the construction, add all edges in $K_k^n$ joining vertices in $S$ to vertices in $\overline{S}$.
\end{construction}

Recall that both $F_{k,n,t}$ and $I_{k,n,t}$ have size on the order of $|S|nk$, for those sets $S$ given in their respective constructions.  This yields the following theorem, the details of which are again tedious but straightforward, and hence left to the reader.  

\begin{theorem}\label{thmtri}
\begin{enumerate}
\item For $k\ge t\ge 4$ and $n\ge 2$, $F_{k,n,t}$ is a $K_t$-saturated subgraph of $K_k^n$, so
$$\sat(K_k^n,K_t)\le 3(t-2)nk+o(nk).$$
\item For even $t\ge 6$ and $k\ge 3(t-2)$, $I_{k,n,t}$ is a $K_t$-saturated subgraph of $K_k^n$, so
$$\sat(K_k^n,K_t)\le \frac{1}{2}(t^2+t-6)nk+o(nk).$$
\end{enumerate}
\end{theorem}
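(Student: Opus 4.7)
The plan is to verify, for each of $F_{k,n,t}$ and $I_{k,n,t}$, three items: the graph is $K_t$-free, it is $K_t$-saturated inside $K_k^n$, and its size has the stated asymptotic form. Throughout, the pivotal structural feature of both constructions is that $\overline{S}$ is an independent set, because in each construction edges are added only within $S$ and between $S$ and $\overline{S}$. Consequently every copy of $K_t$ in the full graph must contain at least $t-1$ vertices of $S$, spanning a $K_{t-1}$ on at most $t-1$ partite sets.

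For $F_{k,n,t}$ I would argue as follows. After the iterative edge-addition inside $S$, the induced subgraph on $S$ is $K_{t-1}$-free: any $K_{t-1}$ in $S$ uses at most $t-1$ of the $k\ge t$ partite sets, so some $V_i$ in a part not hit by the $K_{t-1}$ still contains a vertex of $\overline{S}$ (since $n\ge 2$ and $|S|$ depends only on $t$), and that vertex is joined to all of $S$ outside its own part, completing a $K_t$; hence the iterative step would have refused any edge producing such a $K_{t-1}$. Combined with independence of $\overline{S}$, this yields $K_t$-freeness. For saturation, any missing edge $uv$ with $u\in V_i$, $v\in V_j$ has at least one endpoint in $\overline{S}$ or both endpoints in $S$. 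In the former case, the $\overline{S}$-endpoint is joined to every vertex of $S$ outside its own part, and the lexicographic construction guarantees a $K_{t-2}$ in $S$ whose parts are any prescribed $(t-2)$-subset of $[t]$; choosing one inside $[t]\setminus\{i,j\}$ and adjoining $uv$ produces a $K_t$. In the latter case, saturation is immediate from the stopping rule of the iterative addition. The size claim follows because $|S|$ depends only on $t$: each $v\in S$ has $(k-1)n - O(|S|)$ neighbours in $\overline{S}$ and $O(|S|^2)$ edges lie inside $S$, so $|E(F_{k,n,t})| = |S|(k-1)n + O(|S|^2)$, giving the stated asymptotic.

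For $I_{k,n,t}$ the same three-step skeleton applies, but the substance lies in verifying that the prescribed triangle-deletion inside $S$ simultaneously achieves (a) the induced subgraph on $S$ contains no $K_{t-1}$, and (b) for every $(t-2)$-element subset $P$ of the $3(t-2)/2$ parts supporting $S$, some $K_{t-2}$ on exactly the parts in $P$ survives in $S$. Property (b) supplies the $K_{t-2}$'s needed in the saturation argument, and the hypothesis $k\ge 3(t-2)$ (which forces $t\ge 6$ and $3(t-2)/2 - 2 \ge t-2$) ensures that enough parts are always available; property (a) yields $K_t$-freeness exactly as in the $F$ case. Missing edges inside $S$ are precisely the edges of the deleted triangles, and re-adding any one of them restores a triangle that combines with surviving $K_{t-3}$'s on the remaining part-pairs to give a $K_{t-1}$ in $S$, and hence a $K_t$ via any outside vertex in a disjoint part. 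I would verify (a) and (b) separately for $t\equiv 2\pmod 4$ and $t\equiv 0\pmod 4$, since the patterns in Figures~\ref{tf1} and~\ref{tf2} differ.

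The main obstacle is precisely this case-by-case bookkeeping for $I_{k,n,t}$: the two-row design is engineered so that the $V^1$-row triangles and the $V^2$-row triangles cut every would-be $K_{t-1}$ in complementary ways while leaving enough untouched $K_{t-2}$'s, and confirming this for each residue class mod $4$ is what the theorem statement calls ``tedious but straightforward''. Once (a) and (b) are in hand, the size bound reduces to the same $|S|(k-1)n + O(|S|^2)$ counting as for $F_{k,n,t}$, and the two stated asymptotic bounds follow.
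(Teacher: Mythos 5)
The paper offers no written proof of this theorem (the verification is declared ``tedious but straightforward'' and left to the reader), and your overall scheme --- $\overline{S}$ is independent, so every $K_t$ needs a $K_{t-1}$ inside $S$; saturation of edges with both ends in $\overline{S}$ via a $K_{t-2}$ in $S$ avoiding the two relevant parts; size $|S|(k-1)n+O(|S|^2)$ --- is exactly the strategy the paper announces just before Constructions~\ref{lexo} and~\ref{tri}. However, your final step fails as written. Your own (correct) count gives leading coefficient $|S|$, and for $F_{k,n,t}$ one has $|S|=(t-2)+\bigl(\binom{t}{2}-1\bigr)=\tfrac12(t^2+t-6)$, while for $I_{k,n,t}$ one has $|S|=2\cdot\tfrac32(t-2)=3(t-2)$. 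Thus the two displayed bounds are attached to the wrong constructions: $F_{k,n,t}$ yields $\tfrac12(t^2+t-6)nk+o(nk)$ and $I_{k,n,t}$ yields $3(t-2)nk+o(nk)$. In particular the bound claimed in item~1, namely $3(t-2)nk+o(nk)$, is \emph{strictly smaller} than $|E(F_{k,n,t})|$ for every $t\ge 4$, so it does not follow from your argument (nor from the construction at all); asserting that ``the two stated asymptotic bounds follow'' from $|E|=|S|(k-1)n+O(|S|^2)$ is the concrete point where the proof breaks. A careful write-up would have flagged this mismatch in the theorem statement rather than concluded.

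Two further gaps need attention. First, for $F_{k,n,t}$ your $K_{t-1}$-freeness argument only explains why the \emph{iterative} step cannot create a bad $K_{t-1}$; you must also check that the initial lexicographic structure is $K_{t-1}$-free (easy: the last-added vertex of any putative $K_{t-1}$ in $S$ is joined to only $t-3$ earlier vertices of $S$). Moreover, your justification that a part missed by a $K_{t-1}$ always meets $\overline{S}$ (``since $n\ge 2$ and $|S|$ depends only on $t$'') is not sound when $k=t$: the construction places $\binom{t-1}{2}$ vertices of $S$ into $V_t$, which can exceed $n$ --- indeed the construction cannot even be carried out with distinct vertices when $n<\binom{t-1}{2}$, which clashes with the hypothesis $n\ge 2$. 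Second, for $I_{k,n,t}$ the entire mathematical content is your properties (a) and (b) together with the check that re-inserting any deleted edge creates a $K_t$; you name these but verify them for neither residue class, and your remark that re-adding one edge ``restores a triangle'' is incorrect, since the other two edges of that deleted triangle remain absent. As it stands the proposal reproduces the paper's announced strategy but neither repairs the mis-assigned bounds nor supplies the deferred verification that constitutes the actual proof.
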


We end with the following question, motivated by the differing number of edges in $F_{k,n,t}$ and $I_{k,n,t}$.

\begin{question}
Is there a linear function $f(t)$ such that for all $k\ge t\ge 3$ and $n$ sufficiently large, $\sat(K_k^n,K_t)\le f(t)kn$?
\end{question}

\end{document}